\makeatletter \@namedef{subjclassname@2010}{%
  \textup{2010} Mathematics Subject Classification}
\newcounter{thm} \numberwithin{thm}{section}
\newtheorem{Theorem}[thm]{Theorem}
\newtheorem{Corollary}[thm]{Corollary}
\newtheorem{Claim}{Claim}
\tikzset{mybrace/.style={decoration={brace,raise=1.8mm},decorate}}
\tikzset{mybracedown/.style={decoration={brace,mirror,raise=1.8mm},decorate}}
\author[O. Roche-Newton]{Oliver Roche-Newton} \address{Institute for Algebra, Johannes Kepler Universit\"{a}t\\
Linz, Austria}
\email{o.rochenewton@gmail.com}
\date{}
\begin{document}

\baselineskip=17pt

\title{A better than $3/2$ exponent for iterated sums and products over $\mathbb R$}

\date{}
\maketitle

\begin{abstract} In this paper, we prove that the bound
\[
\max \{ |8A-7A|,|5f(A)-4f(A)| \} \gg |A|^{\frac{3}{2} + \frac{1}{54}-o(1)}
\]
holds for all $A \subset \mathbb R$, and for all convex functions $f$ which satisfy an additional technical condition. This technical condition is satisfied by the logarithmic function, and this fact can be used to deduce a sum-product estimate
\[
\max \{ |16A| , |A^{(16)}| \} \gg |A|^{\frac{3}{2} + c},
\]
for some $c>0$. Previously, no sum-product estimate over $\mathbb R$ with exponent strictly greater than $3/2$ was known for any number of variables.
Moreover, the technical condition on $f$ seems to be satisfied for most interesting cases, and we give some further applications. In particular, we show that
\[
|AA| \leq K|A| \implies \,\forall d \in \mathbb R \setminus \{0 \}, \,\, |\{(a,b) \in A \times A : a-b=d \}| \ll K^C |A|^{\frac{2}{3}-c'},
\]
where $c,C>0$ are absolute constants.

\end{abstract}





\section{Introduction}

Given a finite set $A \subset \mathbb R$, let $A+A$ and $AA$ denote the sum set and product set of $A$ respectively. That is,
\[
A+A:= \{a+b : a,b \in A \}, \,\,\,\,\,\,\,\,
AA:= \{ab : a,b \in A \}.
\]
The Erd\H{o}s-Szemer\'{e}di \cite{ES} sum-product conjecture states that, for all $\epsilon  > 0$, the bound
\begin{equation} \label{ESconj}
\max \{ |A+A|, |AA| \} \geq |A|^{2- \epsilon}
\end{equation}
holds for all sufficiently large $A \subset \mathbb N$. 

One can also consider an analogue of the Erd\H{os}-Szemer\'{e}di conjecture with more variables. The notation $kA$ and $A^{(k)}$ are used for the $k$-fold sum set and $k$-fold product set respectively. That is,
\[
kA:= \{a_1+ \dots + a_k : a_1,\dots,a_k \in A \}, \,\,\,\,\,\,\,\,
A^{(k)}:= \{a_1 \cdots a_k : a_1,\dots,a_k \in A  \}.
\]
It was also conjectured in \cite{ES} that
\[
\max \{ |kA|, |A^{(k)}| \} \geq |A|^{k- \epsilon}
\]
holds for all sufficiently large $A \subset \mathbb N$, a conjecture which appears to be far out of reach at present.

However, an outstanding result of Bourgain and Chang \cite{BC} establishes that one of the iterated sum set or product set must exhibit unbounded growth. They proved that, for all $h \in \mathbb N$, there exists $k \in \mathbb N$ such that
\begin{equation} \label{BC}
\max \{ |kA|, |A^{(k)}| \} \geq |A|^{h}
\end{equation}
holds for all $A \subset \mathbb N$. See also \cite{ZP} for an alternative and more easily digestible proof of the Bourgain-Chang Theorem.

The sum-product problem can also be studied for $A \subset \mathbb R$, and indeed most contemporary work on the conjectured bound \eqref{ESconj} uses geometric arguments that hold over the reals. The current best estimate towards this conjecture is the bound\footnote{Here and throughout this paper, the notation  $X\gg Y$, $Y \ll X,$ $X=\Omega(Y)$, and $Y=O(X)$ are all equivalent and mean that $X\geq cY$ for some absolute constant $c>0$.}
\[
\max\{|A+A|,|AA| \} \gg |A|^{\frac{4}{3}+ \frac{2}{1167}-o(1)},
\]
due to Rudnev and Stevens \cite{RS}. This estimate holds for all $A \subset \mathbb R$, and no quantitative improvements are known under the additional restriction that $A \subset \mathbb N$.

On the other hand, progress for the real $k$-fold analogue of the Erd\H{o}s-Szemer\'{e}di Conjecture lags far behind the best-known results over $\mathbb N$. For instance, the bound \eqref{BC} is not known to hold even for $h=3/2$ for $A \subset \mathbb R$, at which point the dominant geometric methods appear to reach their limitation.

The main result of this paper gives some progress for this problem, pushing the growth exponent for the $k$-fold sum-product problem beyond the threshold $3/2$.

\begin{Theorem} \label{thm:main1}
   There exists an absolute constant $c>0$ such that, for any finite set $A \subset \mathbb R$
   \[
\max \{ |16A|, |A^{(16)}| \} \geq |A|^{\frac{3}{2}+c}.
\]
\end{Theorem}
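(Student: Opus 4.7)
The plan is to derive Theorem~\ref{thm:main1} from the intermediate result stated in the abstract---the bound $\max\{|8A-7A|,\,|5f(A)-4f(A)|\}\gg|A|^{3/2+1/54-o(1)}$---by specializing $f=\log$ and then converting the outcome into a statement about $|16A|$ and $|A^{(16)}|$.

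First, I would reduce to the case $A\subset\mathbb{R}_{>0}$. Since at least half of $A$ lies in $\mathbb{R}_{>0}$ or $\mathbb{R}_{<0}$, we may (after possibly negating) assume the former; this permits $\log A$ to be defined, and only loses a factor of $2$ in $|A|$ (absorbed into implied constants). Second, apply the intermediate theorem with $f(x)=\log x$, which the paper asserts satisfies the required technical condition. Since $\log$ is a bijection on positive reals, $|5\log A-4\log A|=|A^{(5)}/A^{(4)}|$, so
\[
\max\bigl\{|8A-7A|,\;|A^{(5)}/A^{(4)}|\bigr\}\gg|A|^{3/2+1/54-o(1)}.
\]

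Third, I would convert these into lower bounds on $|16A|$ and $|A^{(16)}|$ via Pl\"unnecke--Ruzsa-type inequalities. The natural inputs are the containments $|A+A|\leq|16A|$ (via the embedding $A+A+14x\hookrightarrow 16A$ for any fixed $x\in A$) and its multiplicative analogue $|AA|\leq|A^{(16)}|$, combined with the Pl\"unnecke bounds $|8A-7A|\leq(|A+A|/|A|)^{15}|A|$ and $|A^{(5)}/A^{(4)}|\leq(|AA|/|A|)^{9}|A|$.

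The main obstacle lies in this final step. A straightforward chain of Pl\"unnecke--Ruzsa inequalities is hopelessly wasteful---inverting them only yields $|16A|\gg|A|^{31/30+O(1/54)}$, far below the target $|A|^{3/2+c}$. To close the gap one must invoke a sharper tool: most plausibly Petridis's refinement of Pl\"unnecke--Ruzsa applied to a carefully chosen subset of $A$, an iterative bootstrap, or a strengthened form of the intermediate result itself (replacing $|8A-7A|$ and $|5\log A-4\log A|$ directly by $|16A|$ and $|A^{(16)}|$ at the cost of a smaller exponent). Engineering the argument so that $c$ remains strictly positive at the end is the heart of the proof.
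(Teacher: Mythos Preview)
Your diagnosis is correct---naive Pl\"unnecke--Ruzsa from $|A+A|$ is far too lossy---but you do not identify the fix, and the proposal as it stands is incomplete. The paper's argument supplies three specific ingredients you are missing.

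First, the paper works not with the max form of the intermediate result but with its \emph{product} form (equation~\eqref{product}):
\[
|8A-7A|^{16}\,|5f(A)-4f(A)|^{11} \gtrsim |A|^{41}.
\]
This allows the two factors to be converted separately and then recombined.

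Second, the conversion uses the Ruzsa triangle inequality with $8A$ (respectively $8f(A)$) as the pivot set, rather than Pl\"unnecke from $|A+A|$:
\[
|8A-7A|\le\frac{|16A|^{2}}{|8A|},\qquad |5f(A)-4f(A)|\le\frac{|13f(A)|^{2}}{|8f(A)|}.
\]
Substituting into the product bound gives
\[
\frac{|16A|^{32}}{|8A|^{16}}\cdot\frac{|13f(A)|^{22}}{|8f(A)|^{11}}\gtrsim |A|^{41}.
\]

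Third---and this is the step that rescues the exponent---the Elekes--Nathanson--Ruzsa theorem (Theorem~\ref{thm:ENR} with $k=8$) yields $|8A|\,|8f(A)|\gg|A|^{3-1/128}$. Eleven of the sixteen factors of $|8A|$ in the denominator pair off against the eleven factors of $|8f(A)|$, contributing $|A|^{11(3-1/128)}$ to the \emph{right}-hand side. The remaining $|8A|^{5}$ is handled by a preliminary dichotomy: either $|8A|\ge|A|^{3/2-1/64}$, in which case it too contributes positively, or else ENR already gives $|8f(A)|\gg|A|^{3/2+1/128}$ and we are done. The upshot is $|16A|^{32}|13f(A)|^{22}\gtrsim|A|^{81+1/3}$, whence $\max\{|16A|,|13f(A)|\}\gtrsim|A|^{3/2+1/162}$.

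So the missing idea is not a sharper Pl\"unnecke or a bootstrap, but the combination of (i) the product-form bound, (ii) Ruzsa triangle with the large pivot $8A$, and (iii) the ENR estimate to convert the resulting $|8A|\,|8f(A)|$ factors from a loss into a gain.
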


Theorem \ref{thm:main1} is a special case of a more general result (see the forthcoming Theorem \ref{thm:main1gen}) concerning the relationship between convexity and sum sets. The guiding idea here is that strictly convex or concave functions disturb any additive structure existing in a set. This idea has been explored in several papers, including the pioneering work of Elekes, Nathanson and Ruzsa \cite{ENR}, who broke new ground using incidence theory.

Recently, a new elementary method was introduced by Solymosi (see \cite{RSSS}) which has enabled further progress. This ``squeezing" technique was used in \cite{HRNR} to prove that the bound
\begin{equation} \label{3/2}
|A+A-A||f(A)+f(A)-f(A)| \gg \frac{|A|^3}{\log|A|}
\end{equation}
holds for all $A \subset \mathbb R$. A further improvement, removing the logarithmic factor, was later given by Bradshaw \cite{B}. This technique has also been used in \cite{HRNS} to give improved bounds for the number of distinct dot products determined by a point set in $\mathbb R^2$. The same squeezing technique is the main tool for this paper, and a closer analysis allows for an improvement to the bound \eqref{3/2} by introducing more variables, provided that the convex function $f$ satisfies an additional technical condition.

As another application of our general result, we prove the following result concerning the number of additive representations for sets with small multiplicative doubling.

\begin{Theorem} \label{thm:main2}
    There exist absolute constants $c,C>0$ such that, for any finite set $A$ of positive reals and any $t \in \mathbb R \setminus \{0 \}$,
    \[
    |AA| \leq K|A| \implies
    |\{ (a,b) \in A \times A : a-b=t \}| \ll K^{C} |A|^{\frac{2}{3}- c} .
    \]
\end{Theorem}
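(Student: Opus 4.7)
Let $M := |\{(a,b) \in A \times A : a - b = t\}|$ and put $B := A \cap (A + t)$, so $|B| = M$, $B \subset A$, and $B - t \subset A$. The plan is to extract growth from $B$ via the general version of Theorem \ref{thm:main1} (the forthcoming Theorem \ref{thm:main1gen}) applied with $f = \log$, and to absorb this growth using the multiplicative hypothesis on $A$.

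The first step is to invoke Theorem \ref{thm:main1gen} for $B$ with the logarithm, which satisfies the technical condition (as asserted in the introduction). This produces
\[
\max\bigl\{|8B - 7B|,\ |5\log B - 4\log B|\bigr\} \gg M^{3/2 + 1/54 - o(1)}.
\]
Since $B \subset A$ and $|AA| \leq K|A|$, the multiplicative Pl\"unnecke--Ruzsa inequality yields $|5\log B - 4\log B| \leq |5\log A - 4\log A| \leq K^{9}|A|$. If the maximum above is realised by the $5\log B - 4\log B$ term, then the two inequalities combine to give $M^{3/2 + 1/54 - o(1)} \ll K^{9}|A|$. Rearranging, and using that $1/(3/2+1/54) = 27/41 < 2/3$, we obtain
\[
M \ll K^{C}|A|^{2/3 - c'}
\]
for suitable absolute $C, c' > 0$, which is exactly the asserted bound.

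The principal obstacle is the remaining case, in which $|8B - 7B|$ realises the maximum; here the multiplicative hypothesis alone furnishes no nontrivial upper bound for $|8B - 7B|$, since for example geometric progressions have $|AA| \ll |A|$ but $|8A - 7A|$ as large as $|A|^{15}$, so the trivial inclusion $|8B - 7B| \leq |8A - 7A|$ is insufficient. I would try to exclude this case by exploiting the fact that $B$ is doubly embedded into $A$, via both $B \subset A$ and $B - t \subset A$: one approach is to insert this extra structure directly into the proof of Theorem \ref{thm:main1gen}, running the squeezing argument from two translates of the $\log$-graph over $B$ simultaneously, so that the $|8B - 7B|$ term can be replaced by a quantity controlled by $|5\log A - 4\log A|$. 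An alternative is to apply a sum-product estimate such as Rudnev--Stevens to $B$, and argue that the multiplicative smallness of $B$ inherited from $A$ prevents $|8B - 7B|$ from being much larger than $|5\log B - 4\log B|$. I expect the resolution of this second case to be the main technical obstacle.
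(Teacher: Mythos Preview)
Your diagnosis of the obstacle is correct, and it is fatal to the approach as stated: applying the main result with $f=\log$ produces a term $|8B-7B|$ that cannot be bounded using only $|AA|\le K|A|$. Neither of your proposed remedies works. A sum-product estimate such as Rudnev--Stevens would tell you that $|8B-7B|$ is \emph{large} when $B$ is multiplicatively small, but what you need is an \emph{upper} bound on $|8B-7B|$, and none is available --- your own geometric-progression example already shows this. ``Running the squeezing argument from two translates simultaneously'' is too vague to evaluate, but the paper does nothing of this kind.

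The actual fix is simpler and uses machinery already set up in the paper: do not take $f=\log$. Instead apply Theorem~\ref{thm:maingen} with the set $\ln(B-t)$ and the function $f(x)=\ln(e^x+t)$, so that $f(\ln(B-t))=\ln B$; this is precisely Corollary~\ref{cor:shifts} with $X=B-t$ and $\lambda=t$ (after reducing to $t>0$ by symmetry). The dichotomy then reads
\[
\max\Bigl\{\Bigl|\tfrac{(B-t)^{(8)}}{(B-t)^{(7)}}\Bigr|,\ \Bigl|\tfrac{B^{(5)}}{B^{(4)}}\Bigr|\Bigr\}\gtrsim |B|^{3/2+1/54},
\]
and since both $B$ and $B-t$ are subsets of $A$, \emph{both} branches are bounded above by $|A^{(8)}/A^{(7)}|\le K^{15}|A|$ via Pl\"unnecke--Ruzsa. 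The freedom in the choice of $f$ converts the uncontrollable additive term into a second multiplicative one, and the extra inclusion $B-t\subset A$ --- which you correctly identified as the unused structure --- is exactly what controls it.
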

A weaker version of Theorem \ref{thm:main2}, without the constant $c$, can be proven by a simple application of the Szemer\'{e}di-Trotter Theorem. Such results are sometimes referred to in the sum-product community as \textit{threshold bounds}. Most of these bounds have now been improved by refining and augmenting these basic arguments. Theorem \ref{thm:main2} represents an improvement for one of the last standing threshold bounds in sum-product theory.

\subsection{Notation} Henceforth, all sets introduced in this paper are assumed to be finite unless stated otherwise. We use $\log$ to denote the logarithm function with base $2$ and $\ln$ for the logarithmic function with base $e$. As mentioned in an earlier footnote, throughout this paper, the notation  $X\gg Y$, $Y \ll X,$ $X=\Omega(Y)$, and $Y=O(X)$ are all equivalent and mean that $X\geq cY$ for some absolute constant $c>0$. $X \gg_a Y$ means that the implied constant is no longer absolute, but depends on $a$. We occasionally use the symbols $\gtrsim$ and $\lesssim$ to additionally absorb logarithmic factors. That is, $X \gtrsim Y $ and $Y \lesssim X$ both denote that $X \gg Y/(\log Y)^c$ for some absolute constant $c>0$.


\section{Preliminary results}

The main external result that will be used in this paper is taken from \cite{HRNR}, and concerns the growth of iterated sum sets of $f(A)$ when $f$ is a highly convex function and $A$ has small additive growth.

For an interval $I$, we say $f : I \rightarrow \mathbb R$ is a $0$-convex function if it is strictly monotone on $I$, and in general, $f$ is $k$-convex on $I$ if each of the derivatives $f^{(1)}, \dots ,f^{(k+1)}$ and, for all $x \in I$ and $1 \leq j \leq k+1$, $f^{j}(x) \neq 0$ (hence all but the last derivative are strictly monotone).

\begin{Theorem} \label{thm:BOM}

Let $A$ be a finite set of real numbers contained in an interval $I$ and let $f:I \rightarrow \mathbb R$ be a function which is $k$-convex, for some $k \geq 1$. Suppose that $|A| > 10k$ and $|A + A - A| \leq  K|A|$. Then
\[
|2^kf(A) - (2^k-1)f(A)| \geq \frac{ |A|^{k+1}}{(CK)^{2^{k+1}-k-2}(\log |A|)^{2^{k+2}-k-4}}
\]
for some absolute constant $C > 0$.
\end{Theorem}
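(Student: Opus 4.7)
The plan is to proceed by induction on the convexity degree $k$.

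For the base case $k=1$, the claim reduces to showing that if $f$ is strictly monotone with non-vanishing second derivative and $|A+A-A| \leq K|A|$, then
\[
|f(A)+f(A)-f(A)| \gg \frac{|A|^2}{K(\log|A|)^3}.
\]
I would obtain this via Solymosi's ``squeezing'' argument for convex functions: order $A = \{a_1 < \cdots < a_n\}$ and note that the consecutive slopes $s_i = (f(a_{i+1})-f(a_i))/(a_{i+1}-a_i)$ are strictly monotone by convexity. A dyadic decomposition of the slope range produces a sub-collection of $\gtrsim n/\log n$ consecutive pairs whose slopes agree up to a factor of two, and within this collection one counts distinct values of the form $f(a_i)+f(a_{i+1})-f(a_j)$ using a parallelogram geometry together with the small-doubling hypothesis on $A$. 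The Pl\"unnecke-Ruzsa inequality controls the various sumsets of $A$ that appear, at a cost of one power of $K$.

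For the inductive step, assume the theorem for $k-1$ and suppose $f$ is $k$-convex. By Pl\"unnecke-Ruzsa applied to $|A+A-A| \leq K|A|$ one has $|A-A| \leq K^{O(1)}|A|$, so pigeonholing on the representation function of $A-A$ produces some $h \neq 0$ with $|A \cap (A-h)| \gg |A|/K^{O(1)}$. Set $A_h := A \cap (A-h)$, so that $A_h + h \subseteq A$, and define the discrete derivative
\[
g(x) := f(x+h)-f(x).
\]
Since $f^{(j)}$ is strictly monotone for $1 \leq j \leq k$, one has $g^{(j)}(x) = f^{(j)}(x+h)-f^{(j)}(x) \neq 0$ for $1 \leq j \leq k$, so $g$ is $(k-1)$-convex on a sub-interval of $I$. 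Moreover $A_h \subseteq A$ inherits the small iterated sumset hypothesis $|A_h+A_h-A_h| \leq K^{O(1)}|A_h|$ via Pl\"unnecke-Ruzsa. The induction hypothesis applied to $g$ and $A_h$ then yields a lower bound of the stated form for $|2^{k-1}g(A_h) - (2^{k-1}-1)g(A_h)|$, and since $g(A_h) \subseteq f(A)-f(A)$ by construction, the chain of inclusions
\[
2^{k-1}g(A_h) - (2^{k-1}-1)g(A_h) \subseteq (2^k-1)f(A) - (2^k-1)f(A) \subseteq 2^k f(A) - (2^k-1)f(A)
\]
(the last step by fixing any $b_0 \in f(A)$ and adding/subtracting $b_0$) converts the lower bound into the desired one for $|2^k f(A) - (2^k-1)f(A)|$.

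Tracking losses at each inductive step — one factor of $K^{O(1)}$ from the pigeonholing of $h$, plus roughly a doubling of the previous $K$-exponent and $\log$-exponent from the Pl\"unnecke-Ruzsa step on $A_h$ — yields a recursion of the form $a_{k+1} = 2a_k + O(k)$ that matches the stated exponents $2^{k+1}-k-2$ for $K$ and $2^{k+2}-k-4$ for $\log|A|$; the hypothesis $|A|>10k$ ensures that the repeatedly-refined subset remains large enough to apply the induction at depth $k$. I expect the principal technical obstacle to be the base case: extracting a bound like $|f(A)+f(A)-f(A)| \gtrsim |A|^2/K$ with only a polylogarithmic loss (rather than a higher power of $K$, as one would obtain from a softer Szemer\'edi-Trotter threshold bound) requires the genuinely geometric convex-squeezing argument. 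A secondary bookkeeping concern is verifying at each stage that the iterated derivatives remain of the correct convexity degree on sub-intervals containing the successive refined sets, and that the Pl\"unnecke-Ruzsa losses are accounted for tightly enough to hit the stated $K$-exponent.
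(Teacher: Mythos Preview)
The paper does not prove this statement; Theorem~\ref{thm:BOM} is quoted from \cite{HRNR} as an external input, so there is no in-paper argument to compare against directly. That said, your inductive scheme as written contains a real gap that prevents it from reaching the exponent $|A|^{k+1}$.

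The problem is the inductive step. Fixing a single popular difference $h$ with $|A_h|=|A\cap(A-h)|\gg |A|/K$ and applying the $(k-1)$-case to $(g,A_h)$ gives
\[
\bigl|2^{k-1}g(A_h)-(2^{k-1}-1)g(A_h)\bigr|
\;\gg\;
\frac{|A_h|^{k}}{(CK')^{\alpha_{k-1}}(\log|A_h|)^{\beta_{k-1}}}
\;\gg\;
\frac{|A|^{k}}{K^{k}(CK^{2})^{\alpha_{k-1}}(\log|A|)^{\beta_{k-1}}},
\]
which has numerator $|A|^{k}$, not $|A|^{k+1}$. Concretely, going from $k=1$ to $k=2$ your argument yields only $|4f(A)-3f(A)|\gg |A|^{2}/K^{O(1)}$, whereas the theorem asserts $|A|^{3}/(CK)^{4}$ up to logarithms; each further step loses another power of $|A|$. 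No amount of bookkeeping on the $K$- and $\log$-exponents repairs this, because the defect is in the main term.

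The missing factor of $|A|$ is precisely the content of the squeezing argument, and it must be re-invoked at every level of the induction rather than only in the base case. In the \cite{HRNR} proof one does not pass to a single pigeonholed $h$; instead, at each stage one runs the consecutive-difference decomposition on the current set and squeezes the iterated sumset of the discrete derivative into the $\gg |A|$ disjoint gaps between consecutive images $f(a_i)$, so that the contributions from all gaps are summed. This is what gains the extra factor of $|A|$ per step and produces $|A|^{k+1}$. Your base-case sketch already contains this mechanism; the correction is to carry it through the induction (decompose, squeeze, sum over gaps) rather than replacing it by a single popular $h$.
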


We will use the following result of Elekes, Nathanson and Ruzsa \cite{ENR}. 

\begin{Theorem} \label{thm:ENR}
Let $I$ be an interval, let $f:I \rightarrow \mathbb R$ be a strictly convex or concave function, and suppose that $A \subset I$. Then, for all $k \in \mathbb N$,
\[
|kA| |kf(A)| \gg |A|^{3-2^{1-k}}.
\]
\end{Theorem}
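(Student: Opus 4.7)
I would argue by induction on $k$, using the Szemer\'{e}di--Trotter theorem for pseudo-lines (Pach--Sharir) applied to the two-parameter family of translates of the graph of $f$ as the only geometric input.

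For the base case $k=1$ the claim reads $|A|\cdot|f(A)|\gg|A|^{2}$, i.e.\ $|f(A)|\gg|A|$. Strict convexity (or concavity) forces $f$ to be strictly monotone on each side of its unique extremum, so any value is attained at most twice in $A$, which gives $|f(A)|\geq|A|/2$.

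For the inductive step, assuming the bound at $k-1$, I propose the family of curves
\[
\Gamma=\bigl\{\gamma_{s,t}:\,y=f(x-s)+t\;:\;s\in(k-1)A,\ t\in(k-1)f(A)\bigr\},
\]
which has cardinality exactly $|(k-1)A|\cdot|(k-1)f(A)|$ because distinct pairs $(s,t)$ yield distinct graphs (the derivative $f'$ is injective). Each $\gamma_{s,t}$ contains the $|A|$ points $\{(s+c,\,t+f(c)):c\in A\}\subseteq kA\times kf(A)$, producing at least $|\Gamma|\cdot|A|$ incidences with $P:=(kA)\times(kf(A))$. The crucial geometric input is that $\Gamma$ is a family of pseudo-lines: for $s_{1}\neq s_{2}$ an intersection of $\gamma_{s_{1},t_{1}}$ and $\gamma_{s_{2},t_{2}}$ satisfies $f(x-s_{1})-f(x-s_{2})=t_{2}-t_{1}$, whose left side is strictly monotone in $x$ because its derivative $f'(x-s_{1})-f'(x-s_{2})$ has constant sign by strict convexity; if $s_{1}=s_{2}$ the curves are disjoint vertical translates. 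A symmetric argument in the roles of $s$ and $x$ shows that any two distinct points of $P$ lie on at most one curve of $\Gamma$.

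Szemer\'{e}di--Trotter/Pach--Sharir for pseudo-lines then gives
\[
|\Gamma|\cdot|A|\ll\bigl(|kA|\cdot|kf(A)|\cdot|\Gamma|\bigr)^{2/3}+|kA|\cdot|kf(A)|+|\Gamma|.
\]
The third term is dominated by the left-hand side once $|A|$ is larger than an absolute constant; the second term already yields $|kA|\cdot|kf(A)|\gg|\Gamma|\cdot|A|\gg|A|^{4-2^{2-k}}$, which beats the required exponent $3-2^{1-k}$; and the first term rearranges to
\[
|kA|\cdot|kf(A)|\gg|A|^{3/2}\bigl(|(k-1)A|\cdot|(k-1)f(A)|\bigr)^{1/2}.
\]
Inserting the inductive hypothesis $|(k-1)A|\cdot|(k-1)f(A)|\gg|A|^{3-2^{2-k}}$ produces $|A|^{3/2+(3-2^{2-k})/2}=|A|^{3-2^{1-k}}$, closing the induction. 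The main obstacle is the clean verification of the pseudo-line property and the case split between the three Szemer\'{e}di--Trotter terms; the exponent is precisely calibrated by the identity $3-2^{1-k}=\tfrac{3}{2}+\tfrac{1}{2}(3-2^{2-k})$, which is really the only place where the specific form of the right-hand side is used.
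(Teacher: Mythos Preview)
Your argument is correct and is precisely the iterated Szemer\'{e}di--Trotter approach of Elekes--Nathanson--Ruzsa that the paper cites (the paper does not reprove the result, only remarks that its proof ``uses a variant of the Szemer\'{e}di--Trotter Theorem applied iteratively''). One small point: you appeal to injectivity of $f'$ to argue that distinct $(s,t)$ give distinct curves, but strict convexity does not force differentiability---your own observation that $x\mapsto f(x-s_{1})-f(x-s_{2})$ is strictly monotone for $s_{1}\neq s_{2}$ already supplies the needed fact without derivatives.
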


Setting $f(x)= \ln x$ in Theorem \ref{thm:ENR} establishes that the bound \eqref{BC} holds over the reals for all $h < 3/2$. The proof of Theorem \ref{thm:ENR} uses a variant of the Szemer\'{e}di-Trotter Theorem applied iteratively.

We will use the Pl\"{u}nnecke-Ruzsa Theorem to control the growth of iterated sum sets.

\begin{Theorem} \label{thm:Plun}
Let $X$ and $Y$ be sets in an abelian group. Then for any non-negative integers $k$ and $l$ such that $k+l > 1$,
\[
|kX-lX| \leq \frac{ |X+Y|^{k+l}}{|Y|^{k+l-1}}.
\]
\end{Theorem}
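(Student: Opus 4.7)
My plan is to prove this Pl\"{u}nnecke--Ruzsa inequality via Petridis's approach, which reduces the bound to two ingredients: Ruzsa's triangle inequality, and a carefully chosen minimizing subset of $Y$. First I would establish Ruzsa's triangle inequality, $|A|\cdot|B-C|\leq|A+B|\cdot|A+C|$ for any finite sets $A,B,C$ in an abelian group, via an explicit injection $A\times(B-C)\hookrightarrow(A+B)\times(A+C)$: for each $d\in B-C$ fix a representation $d=b-c$ and send $(a,d)\mapsto(a+b,a+c)$. From the image pair one recovers $d$ as the difference of the coordinates and then $a$ via the fixed representation, so the map is injective.

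The heart of the argument is Petridis's lemma: if $Y'\subseteq Y$ is nonempty and minimizes the ratio $K':=|Y'+X|/|Y'|$ (so $K'\leq K:=|X+Y|/|Y|$), then $|Y'+X+C|\leq K'|Y'+C|$ for every finite set $C$. I would prove this by induction on $|C|$; the base case $|C|=1$ is just the definition of $K'$. For the inductive step, write $C=C_0\cup\{x\}$ with $x\notin C_0$ and apply inclusion--exclusion to both $|Y'+C|$ and $|Y'+X+C|$. Setting $Z:=\{y\in Y':y+x\in Y'+C_0\}$, one checks $|(Y'+x)\cap(Y'+C_0)|=|Z|$ while $(Y'+X+C_0)\cap(Y'+X+\{x\})\supseteq Z+X+x$, so the intersection in the $X$-augmented setting has size at least $|Z+X|\geq K'|Z|$, where the last inequality is exactly the minimality of $K'$ applied to $Z\subseteq Y$ (with the case $Z=\emptyset$ being trivial). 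Combined with the inductive hypothesis $|Y'+X+C_0|\leq K'|Y'+C_0|$ and the identity $|Y'+X|=K'|Y'|$, the induction closes.

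The two ingredients combine cleanly. Iterating Petridis's lemma with $C=nX$ yields $|Y'+nX|\leq(K')^n|Y'|$, and Ruzsa's triangle inequality applied with $(A,B,C)=(Y',kX,lX)$ gives $|Y'|\cdot|kX-lX|\leq|Y'+kX|\cdot|Y'+lX|\leq(K')^{k+l}|Y'|^2$. Dividing by $|Y'|$ and using $K'\leq K$ together with $|Y'|\leq|Y|$ yields $|kX-lX|\leq K^{k+l}|Y|=|X+Y|^{k+l}/|Y|^{k+l-1}$, as desired. The subtle step is the inductive argument in Petridis's lemma: the intersection term from inclusion--exclusion must be bounded \emph{from below} by $K'|Z|$, and it is precisely the direction of the minimality condition $|Z+X|\geq K'|Z|$ that provides this---any attempt to bound a complementary set $|(Y'\setminus Z)+X|$ from above would use minimality in the wrong direction, so one must set up the inclusion--exclusion to exploit it on the intersection side.
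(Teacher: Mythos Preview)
The paper states Theorem~\ref{thm:Plun} as a standard preliminary result and does not supply its own proof, so there is nothing to compare against on the paper's side. Your argument via Petridis's method is correct and is the now-standard proof of the Pl\"{u}nnecke--Ruzsa inequality: the selection of a ratio-minimising $Y'\subseteq Y$, the inductive inclusion--exclusion establishing $|Y'+X+C|\leq K'|Y'+C|$, the iteration to $|Y'+nX|\leq (K')^n|Y'|$, and the final appeal to Ruzsa's triangle inequality all go through as you describe. Your care about the direction in which minimality is used (bounding the intersection term from below via $|Z+X|\geq K'|Z|$) is exactly the point of the argument.
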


We will also use the following form of the Ruzsa Triangle Inequality.

\begin{Theorem} \label{thm:Ruzsa}
For any sets $X,Y,Z$ in an abelian group,
\[
|Y-Z|\leq\frac{|X+Y||X+Z| }{|X|}.
\]
\end{Theorem}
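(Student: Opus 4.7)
The plan is to prove the Ruzsa triangle inequality by constructing an explicit injection from $X \times (Y-Z)$ into $(X+Y) \times (X+Z)$, a standard strategy for inequalities of this shape.

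First, I would fix, for each element $d \in Y - Z$, a single representation $d = y_d - z_d$ with $y_d \in Y$ and $z_d \in Z$. The existence of such a representation is immediate from the definition of $Y - Z$, and the arbitrary choice is the key device that converts the combinatorial inequality into a counting argument. With this representation function in hand, define
\[
\Phi : X \times (Y-Z) \longrightarrow (X+Y) \times (X+Z), \qquad \Phi(x,d) = (x + y_d,\, x + z_d).
\]
The image of $\Phi$ clearly lies in the target, so it suffices to verify that $\Phi$ is injective. Given a pair $(u,v)$ in the image, we have $u - v = (x + y_d) - (x + z_d) = y_d - z_d = d$, which recovers $d$. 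Once $d$ is determined, $y_d$ and $z_d$ are fixed by our choice of representative, and then $x = u - y_d$ is also determined. Hence $\Phi$ is injective.

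From the injection we conclude
\[
|X|\,|Y - Z| \le |X+Y|\,|X+Z|,
\]
and rearranging gives the claimed inequality. I do not expect any serious obstacle here: the only subtlety is making sure the representation function is genuinely well-defined before invoking it in the definition of $\Phi$, but this is a trivial application of the axiom of choice on a finite index set. The argument uses only that the group operation is invertible, so it works in an arbitrary abelian group as stated.
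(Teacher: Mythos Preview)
Your proof is correct and is precisely the standard injection argument for the Ruzsa triangle inequality. The paper itself does not give a proof of this statement; it is recorded in the preliminary section as a known result and used as a black box, so there is no alternative approach in the paper to compare against.
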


\section{The main general result}

All of the results mentioned in the introduction are derived from the forthcoming Theorem \ref{thm:maingen}, and the purpose of this section is to state and prove this result.

Before doing so, we need to introduce the function $f_d$, which is central to this paper. Let $a \in \mathbb R$ and let $I$ denote the interval $I=(a, \infty)$. Suppose that $f: I \rightarrow \mathbb R$ is a strictly convex or concave function. Let $d$ be a strictly positive real number. Define the function
\[
f_d: I \rightarrow J, \,\,\,\,\, f_d(x):=f(x+d)-f(x).
\]
Here $J$ denotes the range of $f_d$, and $J$ is always an interval. Since $f$ is strictly convex or concave, it follows that the function $f_d$ is monotone. In particular, the inverse function $f_d^{-1}:J \rightarrow I$ is defined.


\begin{Theorem} \label{thm:maingen}
    Let $a \in \mathbb R$ and let $I$ denote the interval $I=(a, \infty)$, and let $m \in \mathbb N$. Suppose that $A \subset I$ is a finite set. Suppose that $f:(a, \infty) \rightarrow \mathbb R$ is a strictly convex or concave function.  Suppose also that, for any $d \in (0, \infty)$, the function $f_d^{-1}: J \rightarrow I$ has the property that its first three derivatives have a combined total of at most $m$ zeroes. Then
\begin{equation} \label{product}
|8A-7A|^{16}|5f(A)-4f(A)|^{11}  \gg_m \frac{|A|^{41}}{ (\log|A|)^{77}}.
\end{equation}
In particular,
\begin{equation} \label{max}
 \max \{|8A-7A|, |5f(A)-4(A)| \} \gtrsim_m |A|^{\frac{3}{2} +\frac{1}{54}}.
\end{equation}
  
\end{Theorem}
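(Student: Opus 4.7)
The plan is to combine a Solymosi-style squeezing argument with Theorem~\ref{thm:BOM} applied to the inverse functions $f_d^{-1}$, using the technical hypothesis to activate the $k = 2$ case of that theorem. Set $K := |8A-7A|/|A|$, so by Pl\"unnecke--Ruzsa (Theorem~\ref{thm:Plun}) we have $|A+A-A| \leq K|A|$ and $|nA - mA| \leq K^{n+m-1}|A|$ for small $n, m$. For each $d > 0$, introduce $A_d := \{a \in A : a+d \in A\}$ and its image $B_d := f_d(A_d) \subseteq f(A) - f(A)$, which has cardinality $|A_d|$ because strict convexity (or concavity) of $f$ makes $f_d$ strictly monotone on $A_d$. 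The hypothesis on $f_d^{-1}$ lets us split the range of $f_d^{-1}$ into at most $m+1$ subintervals on which $f_d^{-1}$ is $2$-convex in the sense of Theorem~\ref{thm:BOM}; restricting $B_d$ to a largest such piece, we obtain $B_d' \subseteq B_d$ with $|B_d'| \gg_m |A_d|$.

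Next, I apply Theorem~\ref{thm:BOM} with $k = 2$ to $g := f_d^{-1}$ acting on $B_d'$, setting $L := |B_d' + B_d' - B_d'|/|B_d'|$. Since $f_d^{-1}(B_d') \subseteq A$ and $B_d' + B_d' - B_d' \subseteq 3f(A) - 3f(A)$, this produces a per-$d$ inequality of the schematic form
\[
|4A - 3A| \cdot |3f(A) - 3f(A)|^{4} \ \gg_m \ \frac{|A_d|^{7}}{(\log|A|)^{10}}.
\]
The Ruzsa triangle inequality (Theorem~\ref{thm:Ruzsa}) applied with $X = -4f(A), Y = Z = 5f(A)$, combined with $3f(A) - 3f(A) \subseteq 5f(A) - 5f(A)$, yields $|3f(A) - 3f(A)| \leq |5f(A) - 4f(A)|^2 / |A|$. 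Substituting this and using $|4A - 3A| \leq |8A - 7A|$ converts the above into a bound involving only $|8A - 7A|$, $|5f(A) - 4f(A)|$, and $|A_d|$. A dyadic pigeonhole over $d \in A - A$ (costing one factor of $\log|A|$) identifies a scale $\tau$ and a set $D \subseteq A - A$ with $|D| \cdot \tau \gtrsim |A|^2$ and $|A_d| \asymp \tau$ on $D$; a H\"older-type amalgamation of the per-$d$ inequalities across $D$, together with the baseline bound from \cite{HRNR}, \cite{B} (essentially the $k = 1$ instance of Theorem~\ref{thm:BOM}, namely $|A+A-A| \cdot |2f(A)-f(A)| \gg |A|^3/\mathrm{polylog}|A|$), should balance exponents and deliver \eqref{product}.

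The max-form \eqref{max} is then immediate from \eqref{product}: since $|8A-7A|^{16} |5f(A)-4f(A)|^{11} \leq \max(|8A-7A|, |5f(A)-4f(A)|)^{27}$, we extract $\max \gtrsim_m |A|^{41/27} = |A|^{3/2 + 1/54}$. The principal technical obstacle is the exponent bookkeeping: applying Theorem~\ref{thm:BOM} at $k = 2$ and pigeonholing on a single $d$ naively only produces max-exponents in the neighbourhood of $9/8$, strictly worse than the $3/2$ baseline. The improvement to $3/2 + 1/54$ must come from a careful dyadic amalgamation over many $d$ simultaneously, optimally blended with the Solymosi baseline so that the Pl\"unnecke--Ruzsa losses on $|kA - lA|$ (polynomial in $K$) are paired against the Theorem~\ref{thm:BOM} gains (also polynomial in $K$) in exactly the right proportion. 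This is precisely where the technical hypothesis on $f_d^{-1}$, rather than on $f$ itself, is essential, since it is exactly what permits the $k = 2$ application of Theorem~\ref{thm:BOM} to the inverse maps, supplying the structural gain that beats $3/2$.
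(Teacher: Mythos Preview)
Your proposal has a genuine gap at exactly the point you flag yourself: the ``H\"older-type amalgamation'' over $d \in A-A$ is never carried out, and nothing in your setup can push past $3/2$. The per-$d$ inequality you extract from Theorem~\ref{thm:BOM} has a left-hand side that is independent of $d$, so pigeonholing over $d$ merely selects the single best $d$; as you concede, this lands near exponent $9/8$. Blending with the baseline $|A+A-A|\,|2f(A)-f(A)| \gg |A|^3$ cannot rescue this, since that baseline sits exactly at $3/2$ --- you need a source of gain strictly beyond $3/2$, and your outline supplies none. The Pl\"unnecke/Ruzsa manipulations you describe only \emph{lose} powers of the doubling, they never create the extra saving.

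The paper's argument is organised quite differently, and the missing idea is tied to working with \emph{consecutive} differences rather than all of $A-A$. Write $A=\{a_1<\dots<a_n\}$, set $A_d=\{a_i:a_{i+1}-a_i=d\}$, and pigeonhole to $D'$ with $|A_d|\asymp K$ on $D'$ and $|D'|K\gtrsim|A|$ (so this $K$ is not your doubling parameter). A further pigeonhole fixes $L\ge1$ with $|2f_d(A_d'')-2f_d(A_d'')|\asymp LK$ for $d$ in a large $D''\subset D'$, where $A_d''$ is essentially the smaller half of $A_d$. Three bounds are then multiplied:
(i) $|A+A-A|\gtrsim |A||D'|$, by squeezing the small consecutive differences into the gaps $(a_i,a_{i+1})$;
(ii) $|5f(A)-4f(A)|\gtrsim |A|KL$, by squeezing the non-negative half of $2f_d(A_d'')-2f_d(A_d'')$ --- a subset of $4f(A)-4f(A)$ of size $\gg LK$, contained in $[0,f(a_{i_j+1})-f(a_{i_j}))$ for the larger half of indices $j$ --- into the consecutive gaps of $f(A)$;
(iii) $|8A-7A|\gtrsim_m K^4/L^{11}$, from Theorem~\ref{thm:BOM} with $k=3$ applied to $f_d^{-1}$ (restricted to an interval where its first three derivatives are nonzero) acting on $f_d(A_d'')$, whose additive tripling is $O_m(L)$.
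Taking the product $\text{(iii)}\cdot\text{(i)}^{15}\cdot\text{(ii)}^{11}$ cancels $L$ exactly and leaves $(K|D'|)^{15}|A|^{26}\gtrsim|A|^{41}$, which is \eqref{product}. Step~(ii) is the decisive innovation and has no counterpart in your sketch: because your $A_d$ are not tied to consecutive elements, there is no disjointness forcing the translates $f(a)+E_d$ into distinct gaps, and hence no way to manufacture the factor of $L$ on the winning side that is needed to beat $3/2$.
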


\begin{proof}

The inequality \eqref{max} follows from \eqref{product} by the pigeonhole principle. Therefore, it is sufficient to prove \eqref{product}.

We will consider the case where $f$ is strictly convex only. For strictly concave $f$, the same proof works with some minor modifications. Write $A=\{a_1< \dots < a_n \}$ and define
\[
D:=\{ a_{i+1} - a_i : 1 \leq i \leq n-1 \}
\]
to be the set of consecutive differences determined by $A$. We will begin by performing a dyadic decompositions in order to regularise the set $D$.

For $d \in D$, define
\[
A_d:= \{ a_i \in A : a_{i+1} - a_i=d \}.
\]
Observe that
\[
\sum_{d \in D} |A_d| = n-1.
\]
After a dyadic decomposition and pigeonholing, it then follows that there exists $D' \subset D$ and some $K \in \mathbb N$ such that
\[
K \leq |A_d|< 2K, \,\,\,\, \forall \, d \in D'
\]
and
\begin{equation} \label{D'Kbound}
|D'|K \gg \frac{ |A|}{ \log |A|}.
\end{equation}

For simplicity of notation, we assume that $|A_d|=K$, and discard any additional elements if necessary (strictly speaking, we are passing to another subset here, but we do not introduce any new notation for this subset). We also make the simplifying assumption that $K$ is divisible by $4$.

The following lower bound for $A+A-A$ is essentially copied from an argument of \cite{HRNR}.

\begin{Claim} \label{claim1}
\[
|A+A-A| \gg \frac{|A||D'|}{\log|A|}.
\]
\end{Claim}

\begin{proof}[Proof of Claim \ref{claim1}]
Decompose $D'= D_{big}' \sqcup D_{small}'$ into two disjoint parts with sizes as equal as possible (in particular $|D_{small}| - 1 \leq |D_{big}| \leq |D_{small}| $) such that every element of $D_{big}'$ is larger than every element of $D_{small}'$. Let $A' \subset A$ denote the set
\[
A'= \{a_i \in A : a_{i+1} - a_i \in D_{big}' \} = \bigcup_{d \in D_{big}'} A_d.
\]
Observe that $|A'| \geq |D_{big}'|K \gg |D'|K $. Also, for each $a_i \in A'$, note that
\[
a_i + D_{small}' \subset (a_i,a_{i+1}).
\]
This implies the existence of $|D'|/2$ elements of $A+A-A$ in the interval $(a_i,a_{i+1})$. Summing over all $a \in A'$ and applying \eqref{D'Kbound}, it follows that
\[
|A+A-A| \gg |D'|^2K \gg \frac{|A||D'|}{\log|A|}.
\]
\end{proof}

Claim \ref{claim1} allows us to henceforth assume that 
\begin{equation} \label{Kmassume}
K \geq 8m.
\end{equation}
Indeed, suppose that this is not the case. Then combining Claim \ref{claim1} with \eqref{D'Kbound}, it follows that
\[
|A+A-A| \gg \frac{|A|^2}{m( \log|A|)^2},
\]
which is strong enough to imply the intended bound \eqref{product}.

Next, we make some additional refinements to the sets $D'$ and $A_d$. This is done in order to set up a lower bound for iterated sums of $f(A)$.

Recall from the definition at the beginning of this section that $f_d(x):=f(x+d)-f(x)$. For a given $d \in D'$, write
\[
A_d= \{ a_{i_1}, \dots, a_{i_K} \}
\]
where $i_1 < \dots < i_K$.  Let $A_d'$ denote the smallest half of $A_d$, so
\[
A_d'= \{ a_{i_1}, \dots, a_{i_{K/2}} \}.
\]
Observe that, since $f$ is strictly convex,
\[
f_d(a_{i_1}) < \dots < f_d(a_{i_K}).
\]
Define
\[
t_d:=f_d(a_{i_{K/2}}).
\]
That is, $t_d$ is the largest element of $f_d(A_d')$. Hence $f_d(A_d') \subset (0,t_d]$.

In the forthcoming argument, it will be helpful to force the elements of $f_d(A_d')$ to be slightly closer to each other. For each  $a_{i_j} \in A_d'$, the image $f_d(a_{i_j})$ belongs to one of the intervals $(0,t_d/2]$ or $(t_d/2,t_d]$, and so it follows that at least half of the elements $f_d(a_{i_j})$ are in the same interval. In order to simplify the notation further, let us assume that the first of these intervals is well populated. In particular, if we define
\[
A_d'':= \{ a_{i_1}, \dots, a_{i_{K/4}} \},
\]
then $f_d(A_d'') \subset (0,t_d/2]$, and it follows that
\[
2f_d(A_d'')-2f_d(A_d'') \subset (t_d,t_d).
\]
We dyadically decompose $D'$ according to the size of the difference set $2f_d(A_d'')-2f_d(A_d'')$. Observe that
\[
|D'|= \sum_{j =1}^{ 4\log |A|}\sum_{\stackrel{d \in D' :}{ 2^{j-1}K \leq |2f_d(A_d'')-2f_d(A_d'')| <2^{j}K}} 1.
\]
Therefore, there exists $D'' \subset D'$ such that 
\[
|D''| \gg \frac{|D'|}{\log |A|}
\]
and, for all $d \in D''$,
\begin{equation} \label{doubling}
LK \leq |2f_d(A_d'')-2f_d(A_d'')| < 2LK,
\end{equation}
where $L \geq 1$ is some fixed integer. For the remainder of the proof, we work only with the differences $d \in D''$.

The following claim builds on work of \cite{HRNR}, where a basic version of this argument was used to prove the bound $|2f(A)-f(A)| \gg \frac{ |A|K}{\log |A|}$, which can then be combined with the bound of Claim \ref{claim1} to prove the bound \eqref{claim2}. The difference in approach for the following claim is that we squeeze an iterated difference set into the gaps, which results in some improvement unless the previously defined parameter $L$ is constant. A similar approach was recently used in \cite{RNZ}.

\begin{Claim} \label{claim2}
    \[
    |5f(A)-4f(A)| \gg \frac{ |A|KL}{(\log |A|)^2}.
\]
\end{Claim}

\begin{proof}[Proof of Claim \ref{claim2}]

Let $d \in D''$ and let $E_d$ denote the non-negative elements of $2f_d(A_d'')- 2f_d(A_d'')$. So, $E_d \subset [0,t_d)$, and since the difference set is symmetric,
\begin{equation} \label{Edbound}
|E_d| \gg |2f_d(A_d'')-2f_d(A_d'')| \geq LK.
\end{equation}
Now, let $K/2 <j \leq K$ and consider the set of sums
\begin{equation} \label{inclusion}
f(a_{i_j}) + E_d \subset [f(a_{i_j}), f(a_{i_j})+t_d) \subset [f(a_{i_j}), f(a_{i_j+1})).
\end{equation}
The first inclusion above is valid because $E_d \subset [0,t_d)$. The second is equivalent to the inequality
\begin{equation} \label{equiv}
f(a_{i_j})+t_d \leq f(a_{i_j+1}).
\end{equation}
Since $a_{i_j} \in A_d$, it follows that $a_{i_j+1}=a_{i_j}+d$, and so the inequality \eqref{equiv} can be written as $f_d(a_{i_j})\geq t_d =f_d(a_{i_{K/2}})$. This is valid, since $f_d$ is monotone increasing and $j >K/2$. We have thus verified the inclusion \eqref{inclusion}.

On the other hand, it follows from the construction of the set $E_d$ that $E_d \subset 4f(A)-4f(A)$, and therefore
\[
f(a_{i_j}) + E_d \subset  (5f(A)-4f(A) ) \cap  [f(a_{i_j}), f(a_{i_j+1})).
\]
We have thus identified $|E_d|$ elements of $5f(A)-4f(A)$ lying in between consecutive elements of $f(A)$. Recall from \eqref{Edbound} that $|E_d| \gg LK$. We repeat this process for each $K/2<j<K$, and then for each $d \in D''$. It follows from \eqref{D'Kbound} that
\[
|5f(A)-4f(A)| \gg LK^2|D''| \gg \frac{LK^2|D'|}{\log|A|} \gg \frac{LK|A|}{(\log|A|)^2} ,
\]
which completes the proof of the claim.

\end{proof}

Combining the previous two claims, we are done unless $L$ is very small. However, if this is the case, the next claim allows us to win on the $A$ side. The idea here is that, if $L$ really is small, then this implies that some additive structure is buried in $f(A)$, which in turn should imply that $A$ grows under addition.

\begin{Claim} \label{claim3}
\[
|8A-7A| \gg  \frac{ K^4}{(Lm)^{11}(\log |A|)^{25}}.
\]
\end{Claim}

\begin{proof}[Proof of Claim \ref{claim3}]

It follows from \eqref{doubling} that
\[
|f_d(A_d'')+ f_d(A_d'') - f_d(A_d'') | \leq 2LK=8L|f_d(A_d'')|.
\]

Now, recall the assumption from the statement of the theorem that the function $f_d^{-1}:J \rightarrow I$ exists and that its first three derivatives have a total of at most $m$ zeroes. We use these zeroes to divide the codomain of $f_d^{-1}$ into $m-1$ pairwise disjoint subintervals $J_1, \dots, J_{m+1} \subset J$ , such that on each of the subintervals, all of the first three derivates are non-zero.

Let
\[
I_1=f_d^{-1}(J_1), \dots , I_{m+1}=f_d^{-1}(J_{m+1}).
\]
Observe that the intervals $I_1, \dots, I_{m+1} \subset I$ are pairwise disjoint, and that the union
$
\bigcup_{i=1}^{m+1} I_i
$
is equal to the the set $I$ with at most $m$ points removed. Since $A_d''  \subset I$, it follows that there is some index $i$ such that 
\[
|A_d'' \cap I_i| \geq \frac{|A_d''| - m}{m+1} \geq \frac{ K}{16m}.
\]
The last inequality above makes use of \ref{Kmassume}. Let
\[
A_d''':= A_d'' \cap I_i
\]
and define $g: J_i \rightarrow I$ to be the function $f_d^{-1}$ with the domain restricted to $J_i$. Since the first three derivatives of $g$ are all non-zero in $J_i$, the function $g$ is by definition $3$-convex.

It follows from \eqref{doubling} that
\begin{align*}
|g(A_d''') + g(A_d''') - g(A_d''') & \leq |f_d(A_d'')+ f_d(A_d'') - f_d(A_d'') |
\\& \leq 2LK  \leq 32Lm|g(A_d''')|.
\end{align*}

Apply Theorem \ref{thm:BOM} with
\[
f=g , \,\,\,\,\,\, k=3, \,\,\,\,\,\, A=f_d(A_d'''), \,\,\,\,\,\, K=32Lm.
\]
It follows that
\begin{align*}
    |8A-7A| & \geq |8A_d'''-7A_d'''|
    \\& \geq \frac{ |A_d''|^4}{(C'mL)^{11}(\log |A|)^{25}}
    \\& \gg  \frac{ K^4}{(Lm)^{11}(\log |A|)^{25}}.
\end{align*}
\end{proof}

The final task is to combine the inequalities from the previous three claims to complete the proof. We have
\begin{align*}
|8A-7A|^{16}|5f(A)-4f(A)|^{11} & \geq 
|8A-7A||A+A-A|^{15}|5f(A)-4f(A)|^{11}  \\ &\gg \frac{ K^4}{(Lm)^{11}(\log |A|)^{25}} \cdot \left ( \frac{|A||D'|}{\log |A|}\right )^{15} \cdot \left ( \frac{ |A|KL}{(\log |A|)^2}\right)^{11} 
\\ & = \frac{|A|^{26}K^{15}|D'|^{15}}{m^{11}(\log|A|)^{62}}
\\& \gg \frac{|A|^{41}}{ m^{11}(\log|A|)^{77}}
\\& \gg_m \frac{|A|^{41}}{(\log|A|)^{77}}
\end{align*}
The last inequality above is an application of \eqref{D'Kbound}. This completes the proof of \eqref{product}, and therefore also that of Theorem \ref{thm:maingen}.

\end{proof}

\section{Applying Theorem \ref{thm:maingen} for some particular convex functions of interest}

\begin{Corollary} \label{cor:log}
     Suppose that $A \subset (0, \infty)$ is a finite set. Then
    \[
    \max \left \{|8A-7A|, \left|\frac{AAAAA}{AAAA}\right| \right \} \gtrsim |A|^{\frac{3}{2} +\frac{1}{54}}.
    \]
    
\end{Corollary}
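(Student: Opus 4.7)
The plan is to apply Theorem \ref{thm:maingen} to the strictly concave function $f(x) = \ln x$ on the interval $I = (0,\infty)$, and to translate the conclusion through the bijection $\ln$ into multiplicative language. The first ingredient is immediate: $\ln$ is strictly concave. The main task is to verify the technical hypothesis that, for every $d > 0$, the inverse $f_d^{-1}$ has only finitely many zeros among its first three derivatives.

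For $f=\ln$, I would first compute $f_d(x) = \ln(x+d) - \ln(x) = \ln(1 + d/x)$, observe that this is a strictly decreasing bijection from $(0,\infty)$ onto $J = (0,\infty)$, and invert it explicitly to obtain
\[
f_d^{-1}(y) = \frac{d}{e^y - 1}, \qquad y \in (0,\infty).
\]
The key observation I would then verify by a direct differentiation is that on $(0,\infty)$ the three derivatives are
\[
(f_d^{-1})'(y) = -\frac{d e^y}{(e^y - 1)^2}, \quad (f_d^{-1})''(y) = \frac{d e^y(e^y+1)}{(e^y-1)^3}, \quad (f_d^{-1})'''(y) = -\frac{d e^y(e^{2y} + 4e^y + 1)}{(e^y-1)^4},
\]
and each is of constant sign (negative, positive, negative respectively) because all factors involved are strictly positive for $y > 0$. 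Hence none of these derivatives has a zero on $J$, and we may take $m = 0$ in Theorem \ref{thm:maingen}. This derivative bookkeeping, while routine, is the only place where anything needs to be checked, and I expect it to be the only mild obstacle; it happens to come out especially cleanly for $\ln$.

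Finally, I would translate the additive conclusion into a multiplicative one. Because $\ln$ is an injection from $(0,\infty)$ onto $\mathbb{R}$, for any finite $A \subset (0,\infty)$ we have
\[
|5\ln(A) - 4\ln(A)| = \left|\ln\!\left(\frac{AAAAA}{AAAA}\right)\right| = \left|\frac{AAAAA}{AAAA}\right|,
\]
where $AAAAA/AAAA := \{x/y : x \in AAAAA,\, y \in AAAA\}$. Substituting this into \eqref{max} of Theorem \ref{thm:maingen} (with $m=0$, so that the implicit constant is absolute) yields exactly
\[
\max\!\left\{|8A-7A|,\ \left|\frac{AAAAA}{AAAA}\right|\right\} \gtrsim |A|^{\frac{3}{2} + \frac{1}{54}},
\]
which is the stated corollary.
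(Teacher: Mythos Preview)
Your proposal is correct and follows essentially the same route as the paper: apply Theorem~\ref{thm:maingen} with $f=\ln$, compute $f_d^{-1}(y)=d/(e^y-1)$, and check by direct differentiation that its first three derivatives are nowhere zero on $(0,\infty)$. The paper records the same three derivative formulas you found and leaves the multiplicative translation implicit; your explicit remark that $|5\ln(A)-4\ln(A)|=|AAAAA/AAAA|$ is a welcome clarification but not a departure in method.
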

\begin{proof}
    Apply Theorem \ref{thm:maingen} with $f(x)= \ln x$. The function $f_d:(0, \infty) \rightarrow (0,\infty)$ is defined by
    \[
    f_d(x)= \ln \left ( \frac{x+d}{x} \right).
    \]
    Its inverse is 
    \[
    f_d^{-1}:(0, \infty) \rightarrow (0,\infty), \,\,\,\,\,\, f_d^{-1}(x)=\frac{d}{e^x-1}.
    \]
    A direct calculation shows that the first three derivatives of $f_d^{-1}$ are non-zero in $(0, \infty)$. Indeed, the first three derivatives are
    \begin{align*}
       (f_d^{-1})^{(1)}(x) & = \frac{-de^x}{(e^x-1)^2},
       \\ (f_d^{-1})^{(2)}(x) &=  \frac{de^x (e^x+1)}{(e^x-1)^3} ,
       \\ (f_d^{-1})^{(3)}(x) &=  \frac{-de^x (e^{2x}+4e^x+1)}{(e^x-1)^4} .
    \end{align*}
\end{proof}

In this case, the condition that $A$ consists only of positive elements is not significant. For an arbitrary finite set $A \subset \mathbb R$, at least $\frac{|A|-1}{2}$ of the elements have (strictly) the same sign. let $A' \subset A$ be a set with size at least $\frac{|A|-1}{2}$ such that all elements of $A'$ have the same sign. If $A' \subset (0, \infty)$ then apply Corollary \ref{cor:log} to $A'$. Otherwise, it can be applied to $-A'$ to give the same result.

\begin{Corollary} \label{cor:shifts}
     Let $\lambda$ be any strictly positive real number and suppose that $X \subset (0, \infty)$ is a finite set. Then
    \[
    \max \left \{\left|\frac{X^{(8)}}{X^{(7)}} \right |, \left|\frac{(X+\lambda)^{(5)}}{(X+\lambda)^{(4)}}\right| \right \} \gtrsim |X|^{\frac{3}{2} +\frac{1}{54}}.
    \]
    
\end{Corollary}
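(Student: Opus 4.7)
The plan is to apply Theorem \ref{thm:maingen} in logarithmic coordinates, extending the strategy of Corollary \ref{cor:log} with a different choice of convex function $f$. Specifically, I would set $A := \ln X = \{\ln x : x \in X\}$ and take
\[
f(y) := \ln(e^y + \lambda),
\]
which is defined on all of $\mathbb R$ and satisfies $f(\ln x) = \ln(x+\lambda)$. Consequently $f(A) = \ln(X+\lambda)$, and the two iterated difference sets appearing in Theorem \ref{thm:maingen} translate, via exponentiation, into the quotient sets of interest:
\[
|8A-7A| = \left|\frac{X^{(8)}}{X^{(7)}}\right|, \qquad |5f(A)-4f(A)| = \left|\frac{(X+\lambda)^{(5)}}{(X+\lambda)^{(4)}}\right|.
\]
Strict convexity of $f$ is immediate from $f''(y) = \lambda e^y/(e^y+\lambda)^2 > 0$, where positivity of $\lambda$ is used.

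The bulk of the work is verifying the technical hypothesis: the first three derivatives of $f_d^{-1}$ should have a combined total of at most $m$ zeros, with $m$ independent of $d > 0$. Explicitly solving $z = f_d(y) = \ln\bigl((e^{y+d}+\lambda)/(e^y+\lambda)\bigr)$ for $y$ gives
\[
f_d^{-1}(z) = \ln \lambda + \ln(e^z - 1) - \ln(e^d - e^z), \qquad z \in (0, d),
\]
in which $\lambda$ enters only as an additive constant, so the derivatives depend only on $d$. A short computation shows that $(f_d^{-1})'(z) = e^z(e^d-1)/[(e^z-1)(e^d-e^z)]$ is strictly positive on $(0,d)$, and the third derivative breaks as a sum of two strictly positive terms on $(0,d)$, so neither derivative has a zero. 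The second derivative equals $-e^z/(e^z-1)^2 + e^{z+d}/(e^d-e^z)^2$, and setting it to zero reduces, after clearing denominators, to $e^d(e^z-1)^2 = (e^d-e^z)^2$; taking positive square roots yields the linear equation $e^{d/2}(e^z-1) = e^d - e^z$, whose unique solution on $(0,d)$ is $z = d/2$. Hence the hypothesis holds with the absolute constant $m = 1$.

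The main obstacle is precisely this second-derivative analysis: the second derivative is a difference of two terms of opposite sign that blow up at the endpoints of $(0,d)$, so obtaining the sharp count of exactly one zero requires the algebraic reduction above, which linearises the transcendental equation in $e^z$ via a square root. With all hypotheses verified, Theorem \ref{thm:maingen} immediately yields
\[
\max\bigl\{|8A-7A|,\,|5f(A)-4f(A)|\bigr\} \gtrsim |A|^{3/2 + 1/54},
\]
which, together with $|A|=|X|$ and the translations displayed above, is exactly the claim.
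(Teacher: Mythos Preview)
Your proof is correct and follows the same overall strategy as the paper: set $A=\ln X$, $f(y)=\ln(e^y+\lambda)$, and verify the derivative hypothesis on $f_d^{-1}$ in order to invoke Theorem~\ref{thm:maingen}. The one noteworthy difference is in the zero-counting step. The paper computes the three derivatives of $f_d^{-1}$ as rational functions in $e^x$ and bounds the total number of zeroes by the degrees of the numerators, obtaining $m=5$. Your additive decomposition $f_d^{-1}(z)=\ln\lambda+\ln(e^z-1)-\ln(e^d-e^z)$ is cleaner: it makes the first and third derivatives visibly positive as sums of positive terms, and your square-root trick pins down the unique zero of the second derivative at $z=d/2$, giving $m=1$. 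This is a sharper analysis, though it only affects the implied constant in the $\gtrsim$.
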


\begin{proof}
    Apply Theorem \ref{thm:maingen} with $A= \ln X$ and $f(x)= \ln (e^x+\lambda)$. The function $f_d: \mathbb R \rightarrow (0,d)$ is defined by
    \[
  f_d(x)=  \ln \left ( \frac{e^{x+d}+\lambda}{e^x+ \lambda} \right).
    \]
    Its inverse is
    \[
    f_d^{-1}:(0, d) \rightarrow \mathbb R, \,\,\,\,\,\, f_d^{-1}(x)= \ln \left ( \frac{\lambda(e^{x}-1)}{e^d-e^x} \right).
    \]
   The first three derivatives of $f_d^{-1}$ are
    \begin{align*}
       (f_d^{-1})^{(1)}(x) & = \frac{(e^d-1)e^x}{(e^x-1)(e^d-e^x)},
       \\ (f_d^{-1})^{(2)}(x) &=  \frac{(e^d-1)e^x(e^{2x}-e^d)}{(e^x-1)^2(e^d-e^x)^2},
       \\ (f_d^{-1})^{(3)}(x) &=  \frac{(e^d-1)e^x[e^{4x}+(e^d+1)e^{3x}-6e^de^{2x}+(e^{2d}+e^d)e^x+e^{2d}]}{(e^x-1)^3(e^d-e^x)^3}.
    \end{align*}
    The three derivatives have a combined total of at most $5$ zeroes in $(0,d)$, and so Theorem \ref{thm:maingen} can indeed be applied with $m=5$.
\end{proof}

Unfortunately, the condition that $A$ consists of a set of positive reals appears to be a more meaningful restriction for Corollary \ref{cor:shifts}, and this cannot be easily removed by applying the result to a dilate of the set.

We have also directly verified that the additional condition of Theorem \ref{thm:maingen} is valid for other notable functions such as $f(x)=e^x$ and $f(x)=x^3$. The details of these calculations are omitted. It appears likely that Theorem \ref{thm:maingen} also holds for $f(x)=x^d$ with $d \geq 3$ an integer. However, the additional condition does not hold for the quadratic function $f(x)=x^2$.

We now use Corollary \ref{cor:shifts} to prove Theorem \ref{thm:main2}. We state and prove a more quantitatively precise version of the theorem below.

\begin{Corollary}
    Let $A$ be a set of positive real numbers and suppose that $|AA| \leq K|A|$. Then, for all $t \in \mathbb R$ such that $t \neq 0$,
    \[
    |\{ (a,b) \in A \times A : a-b=t \}| \lesssim K^{\frac{405}{41}} |A|^{\frac{2}{3}- \frac{1}{123}} .
    \]
\end{Corollary}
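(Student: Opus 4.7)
The plan is to apply Corollary \ref{cor:shifts} to a set $X$ that records the additive representations of $t$, and then use multiplicative Pl\"unnecke--Ruzsa to convert the product/quotient set bounds coming from that corollary into a bound in terms of the multiplicative doubling constant $K$ and $|A|$.

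First, by the symmetry $(a,b) \leftrightarrow (b,a)$ we have $N_t = N_{-t}$, where $N_t := |\{(a,b) \in A \times A : a-b = t\}|$, so we may assume $t > 0$. Define
\[
X := \{a \in A : a+t \in A\}.
\]
The map $a \mapsto (a+t, a)$ is a bijection between $X$ and the pairs counted by $N_t$, so $|X| = N_t$. Also $X \subset A \subset (0,\infty)$ and $X + t \subset A$. Applying Corollary \ref{cor:shifts} to $X$ with $\lambda = t > 0$ yields
\[
\max\left\{\left|\frac{X^{(8)}}{X^{(7)}}\right|,\ \left|\frac{(X+t)^{(5)}}{(X+t)^{(4)}}\right|\right\} \gtrsim |X|^{3/2 + 1/54} = N_t^{41/27}.
\]

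The next step is to upper bound both quantities on the left. Since $X \subset A$ and $X+t \subset A$, we have $X^{(8)}/X^{(7)} \subset A^{(8)}/A^{(7)}$ and $(X+t)^{(5)}/(X+t)^{(4)} \subset A^{(5)}/A^{(4)}$. The multiplicative form of Theorem \ref{thm:Plun}, applied with $X = Y = A$ under the multiplicative group operation and using the hypothesis $|AA| \leq K|A|$, gives
\[
\left|\frac{A^{(k)}}{A^{(l)}}\right| \leq \frac{|AA|^{k+l}}{|A|^{k+l-1}} \leq K^{k+l}|A|,
\]
so in particular $|A^{(8)}/A^{(7)}| \leq K^{15}|A|$ and $|A^{(5)}/A^{(4)}| \leq K^9|A|$. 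Hence the maximum above is at most $K^{15}|A|$.

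Combining the two inequalities yields $N_t^{41/27} \lesssim K^{15}|A|$, and raising both sides to the $27/41$ power gives
\[
N_t \lesssim K^{405/41}\, |A|^{27/41}.
\]
The numerical identity $27/41 = 81/123 = 2/3 - 1/123$ then delivers the claimed exponent. There is no real obstacle here beyond carefully bookkeeping the exponents: Corollary \ref{cor:shifts} does all the heavy lifting, while Pl\"unnecke--Ruzsa converts product/quotient set estimates for $X$ and $X+t$ into estimates involving $K$ and $|A|$; the sign condition on $\lambda$ is handled by the trivial symmetry $N_t = N_{-t}$.
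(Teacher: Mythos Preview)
Your proof is correct and follows essentially the same approach as the paper: define the set $X = A \cap (A-t)$ (the paper calls it $A(t)$), apply Corollary~\ref{cor:shifts} with $\lambda = t$, bound the resulting iterated product/quotient sets inside those of $A$ via multiplicative Pl\"unnecke--Ruzsa, and rearrange. The only cosmetic difference is that you separately bound both terms in the maximum (getting $K^{15}|A|$ and $K^9|A|$), whereas the paper passes directly to $|A^{(8)}/A^{(7)}| \leq K^{15}|A|$; the conclusion is identical.
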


\begin{proof}
Write 
\[
r_{A-A}(t):= |\{ (a,b) \in A \times A : a-b=t \}|.
\]
 Since $r_{A-A}(t)=r_{A-A}(-t)$, we may assume without loss of generality that $t$ is positive.

Denote $A(t)=A \cap (A-t)$ and observe that $|A(t)|=r_{A-A}(t)$. Note also that $A(t), A(t)+t \subset A$. Apply Corollary \ref{cor:shifts} with 
\[
X=A(t), \,\,\,\, \lambda=t.
\]
It follows that
\[
\left|\frac{A^{(8)}}{A^{(7)}} \right | \geq \max \left \{\left|\frac{A(t)^{(8)}}{A(t)^{(7)}} \right |, \left|\frac{(A(t)+t)^{(5)}}{(A(t)+t)^{(4)}}\right| \right \} \gtrsim |A(t)|^{\frac{3}{2}+ \frac{1}{54}}.
\]
However, using Theorem \ref{thm:Plun} in the multiplicative setting and the assumption that $|AA| \leq K|A|$, it follows that
\[
|A(t)|^{\frac{3}{2}+ \frac{1}{54}} \lesssim K^{15}|A|.
\]
A rearrangement of this inequality completes the proof.

\end{proof}


\section{Proof of Theorem \ref{thm:main1}}

In this section, we will prove Theorem \ref{thm:main1}. In fact, we will prove the following more general statement, from which Theorem \ref{thm:main1} follows by setting $f(x) = \ln x$. A concrete value for the constant $c$ from Theorem \ref{thm:main1} is given, namely $c= \frac{1}{162}$, although the task of optimising this constant is not pursued to its fullest here.

\begin{Theorem} \label{thm:main1gen}
 Let $a \in \mathbb R$ and let $I$ denote the interval $I=(a, \infty)$. Suppose that $A \subset I$ is a finite set. Suppose that $f:(a, \infty) \rightarrow \mathbb R$ is a strictly convex or concave function.  Suppose also that, for any $d \in (0, \infty)$, the function $f_d^{-1}: J \rightarrow I$ is $3$-convex. Then
    \[
    \max \{|16A|, |13f(A)| \} \gtrsim |A|^{\frac{3}{2} +\frac{1}{162}}.
    \]
\end{Theorem}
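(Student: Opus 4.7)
I'll follow the structure of the proof of Theorem~\ref{thm:maingen} closely, adapting each of the three internal claims so that its output is a lower bound on an iterated sumset---either $|16A|$ or $|13f(A)|$---instead of on the iterated difference sets $|8A-7A|$ and $|5f(A)-4f(A)|$. The slight strengthening of the hypothesis---that $f_d^{-1}$ is $3$-convex on the whole of $J$, rather than having only finitely many zeros among its first three derivatives---lets us apply Theorem~\ref{thm:BOM} directly at each of the levels $k=1,2,3$, without the partitioning step responsible for the $m$-loss in the proof of Theorem~\ref{thm:maingen}.

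After the identical dyadic decomposition (producing $D'\supset D''$, integers $K$ and $L$, and refined subsets $A_d,A_d',A_d''\subset A$ obeying $|D'|K\gg|A|/\log|A|$ and $LK\le|2f_d(A_d'')-2f_d(A_d'')|<2LK$), I would prove three sumset analogs. For Claim~1, the squeezing argument still gives $|A+A-A|\gg|A||D'|/\log|A|$, and Ruzsa's triangle inequality $|A+A-A|\le|3A||2A|/|A|\le|16A|^{2}/|A|$ yields $|16A|\gtrsim|A||D'|^{1/2}$. For Claim~2, the same squeezing produces $|5f(A)-4f(A)|\gg LK|A|/(\log|A|)^{2}$, and Ruzsa's inequality $|5f(A)-4f(A)|\le|13f(A)|^{2}/|A|$ converts this to $|13f(A)|\gtrsim(LK)^{1/2}|A|$. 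For Claim~3, the $3$-convexity of $f_d^{-1}$ lets me apply Theorem~\ref{thm:BOM} at each of $k=1,2,3$ to the set $f_d(A_d'')$ (with input doubling constant $O(L)$), producing simultaneously $|2A-A|\gtrsim K^{2}/L$, $|4A-3A|\gtrsim K^{3}/L^{4}$ and $|8A-7A|\gtrsim K^{4}/L^{11}$. Iterating Ruzsa's triangle inequality $|2^{i+1}A|^{2}\ge|2^{i}A-(2^{i}-1)A|\cdot|2^{i}A|$ for $i=1,2,3$ together with $|2A|\ge|A|$ then combines these into $|16A|\gtrsim K^{3}L^{-53/8}|A|^{1/8}$.

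Finally, combining the three bounds with suitably chosen weighted exponents, and using the constraint $|D'|K\gtrsim|A|/\log|A|$ to eliminate the auxiliary parameters $|D'|$, $K$ and $L$, yields a product inequality of the form
\[
|16A|^{a}\,|13f(A)|^{b}\;\gtrsim\;|A|^{c},
\]
from which $\max\{|16A|,|13f(A)|\}\gtrsim|A|^{c/(a+b)}=|A|^{3/2+1/162}$ follows by the pigeonhole principle.

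The main obstacle is the exponent optimisation in this final step. A naive substitution of the Ruzsa triangle inequality into Theorem~\ref{thm:maingen}'s product bound $|8A-7A|^{16}|5f(A)-4f(A)|^{11}\gtrsim|A|^{41}$ yields only $\max\gtrsim|A|^{34/27}$, which does not even exceed $|A|^{3/2}$. Achieving the sharper target exponent $122/81=3/2+1/162$ requires the tighter iterated-Ruzsa bound of step~(iii) above---crucially relying on the simultaneous availability of the $k=1,2,3$ outputs of Theorem~\ref{thm:BOM}, enabled by the stronger $3$-convexity hypothesis---together with a careful weighted optimisation over $K$, $L$ and $|D'|$. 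The extra factor of $3$ separating $1/162$ from the exponent $1/54$ of Theorem~\ref{thm:maingen} reflects precisely the cost of the Ruzsa-triangle conversions from difference sets to sumsets.
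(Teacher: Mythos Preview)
Your approach has a genuine gap: the three bounds you extract are not strong enough to reach the exponent $3/2+1/162$, and in fact cannot even reach $3/2$. Parametrise $K=|A|^{k}$, $L=|A|^{\ell}$, $|D'|=|A|^{d}$ with the constraint $d+k=1$. Your bounds then read
\[
|16A|\gtrsim |A|^{\,3/2-k/2},\qquad |13f(A)|\gtrsim |A|^{\,1+(k+\ell)/2},\qquad |16A|\gtrsim |A|^{\,3k-53\ell/8+1/8}.
\]
An adversary may take $k=32/67$, $\ell=3/67$ (so $\ell\ge 0$ and $d=35/67$), which makes all three right-hand exponents equal to $169/134\approx 1.261$. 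Hence no weighted combination of (i), (ii), (iii) can guarantee more than $|A|^{169/134}$, well below $|A|^{3/2}$. The optimisation you defer to the final step therefore cannot produce the claimed $3/2+1/162$; the ``careful weighted optimisation'' does not exist with these inputs alone.

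The paper's proof is quite different and short. It uses the product inequality \eqref{product} from Theorem~\ref{thm:maingen} as a black box together with the same Ruzsa conversions $|8A-7A|\le |16A|^{2}/|8A|$ and $|5f(A)-4f(A)|\le |13f(A)|^{2}/|8f(A)|$, but then crucially invokes the Elekes--Nathanson--Ruzsa bound (Theorem~\ref{thm:ENR}) in the form $|8A|\,|8f(A)|\gg |A|^{3-1/128}$. Writing $|8A|^{16}|8f(A)|^{11}=(|8A||8f(A)|)^{11}|8A|^{5}$, one uses ENR on the first factor and assumes without loss of generality that $|8A|\ge |A|^{3/2-1/64}$ (otherwise ENR already gives $|8f(A)|\gg |A|^{3/2+1/128}$). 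This yields $|16A|^{32}|13f(A)|^{22}\gtrsim |A|^{81+1/3}$ and hence the claimed exponent. Your proposal never brings in ENR (or any comparable external input), and without it the argument cannot close; the iterated-Ruzsa refinement of Claim~3, while correct as stated, does not compensate for its absence.
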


\begin{proof}
An application of Theorem \ref{thm:ENR} with $k=8$ gives the bound
\begin{equation} \label{ENRprod}
|8A| |8f(A)| \gg |A|^{3-\frac{1}{128}}.
\end{equation}
It follows that we may assume that 
\begin{equation} \label{assume}
|8A| \geq |A|^{\frac{3}{2} - \frac{1}{64}}.
\end{equation}
Indeed, if \eqref{assume} does not hold then \eqref{ENRprod} gives the bound $|8f(A)| \gg |A|^{\frac{3}{2} + \frac{1}{128}} $, which is stronger than the result claimed in the statement of Theorem \ref{thm:main1gen}.

Two applications of Theorem \ref{thm:Ruzsa} give the bounds
\[
|8A-7A| \leq |8A-8A| \leq \frac{|8A+8A|^2}{|8A|} = \frac{|16A|^2}{|8A|}
\]
and similarly
\[
|5f(A)-4f(A)| \leq  \frac{|13f(A)|^2}{|8f(A)|}.
\]
Combining these bounds with \eqref{product}, it follows that
\[
\frac{|16A|^{32}}{|8A|^{16}}  \cdot \frac{|13f(A)|^{22}}{|8f(A)|^{11}} \geq  |8A-7A|^{16}|5f(A)-4f(A)|^{11}  \gtrsim |A|^{41}.
\]
Rearranging this inequality and then applying \eqref{ENRprod} yields
\[
|16A|^{32}|13f(A)|^{22} \gtrsim |A|^{41 } (|A|^{3-\frac{1}{128}})^{11} |8A|^5.
\]
Applying \eqref{assume} and then tidying things up gives
\[
|16A|^{32}|13f(A)|^{22} \gtrsim  |A|^{81 + \frac{1}{2}- \frac{21}{128}} \geq |A|^{81 + \frac{1}{3}}.
\]
Finally, the claimed bound follows from the pigeonhole principle.
\end{proof}

\section*{Acknowledgements}

The author was supported by the Austrian Science Fund FWF Project P 34180. Thanks to Brandon Hanson, Audie Warren and Dmitrii Zhelezov for helpful conversations.


\begin{thebibliography}{99}









\bibitem{BC} J. Bourgain and M.-C. Chang, `On the size of $k$-fold sum and product sets of integers', \textit{J. Amer. Math. Soc.} 17 (2004), 473--497.



\bibitem{B}P. J. Bradshaw, `Growth in Sumsets of Higher Convex Functions',  \textit{arXiv:2111.03586}.




\bibitem{ENR} G. Elekes, M. Nathanson and I. Ruzsa, `Convexity and sumsets', \textit{J Number Theory.} 83 (1999), 194-201.







\bibitem{ES}P. Erd\H{o}s and E. Szemer\'{e}di, `On sums and products of integers', \textit{ In Studies in pure
mathematics}, Birkh\"{a}user, Basel  (1983), 213-218.






\bibitem{HRNR} B. Hanson, O. Roche-Newton and M. Rudnev, `Higher convexity and iterated sum sets',  \textit{Combinatorica} 42 (2022), no. 1, 71-85.


\bibitem{HRNS} B. Hanson, O. Roche-Newton and S. Senger, `Convexity, superquadratic growth, and dot products',  \textit{To appear in J. London Math. Soc.}.




















\bibitem{RNZ}O. Roche-Newton and D. Zhelezov, `Convexity, elementary methods, and distances',  \textit{Forthcoming}.



\bibitem{RS} M. Rudnev and S. Stevens, `An update on the sum-product problem', \textit{Math. Proc. Camb. Phil. Soc.} 173 (2022), no. 2, 411-430.

\bibitem{RSSS} I. Z. Ruzsa, G. Shakan, J. Solymosi and E. Szemer\'{e}di, `On distinct consecutive differences',  \textit{Combinatorial and additive number theory IV} 425-434,
Springer Proc. Math. Stat., 347, Springer, Cham, (2021).











\bibitem{ZP} D. Zhelezov and D. Pálvölgyi, `Query complexity and the polynomial Freiman–Ruzsa conjecture', \textit{Adv. Math.} 392 (2021), 402-408.



\end{thebibliography}
\end{document}